\newtheoremstyle{theorem}
     {11pt}
     {11pt}
     {}
     {}
     {\bfseries}
     {}
     {.5em}
     {\noindent\thmnumber{#2}. \thmname{#1}\thmnote{#3}}
\theoremstyle{theorem}
\newtheorem{thm}{Theorem}[section]
\newtheorem{lemma}[thm]{Lemma}
\newtheorem{propo}[thm]{Proposition}
\newtheorem{ques}[thm]{Question}
\newcommand{\F}{\mathcal{F}}
\newcommand{\css}{\mathcal{P}(\omega)}
\newcommand{\p}{\mathcal{P}}
\newcommand{\csf}{{}\sp{\omega}2}
\newcommand{\I}{\mathcal{I}}
\newcommand{\U}{\mathcal{U}}
\newcommand{\G}{\mathcal{G}}
\newcommand{\B}{\mathcal{B}}
\title{Non-meager $P$-filters are Countable Dense Homogeneous}
\author[Hern\'andez-Guti\'errez]{Rodrigo Hern\'andez-Guti\'errez}
\author[Hru\v s\'ak]{Michael Hru\v s\'ak}
\address{Centro de Ciencias Matem\'aticas, UNAM, A.P. 61-3, Xangari, Morelia, Michoac\'an, 58089, M\'exico}	
	\email[Hern\'andez-Guti\'errez]{rod@matmor.unam.mx}
	\email[Hru\v s\'ak]{michael@matmor.unam.mx}
\thanks{This paper is part of the first listed author's doctoral dissertation. Research was supported by CONACyT scholarship for Doctoral Students. 
The second listed author acknowledges support from PAPIIT grant IN102311 and CONACyT grant 177758.}
\date{March 1, 2013, revised June 16, 2017}
\subjclass[2010]{54D80, 54H05, 54B10, 54A35}
\keywords{Countable Dense Homogeneous, non-meager $P$-filter}
\begin{document}

\begin{abstract}
We prove that if $\F$ is a non-meager $P$-filter, then both $\F$ and ${}\sp\omega\F$ are countable dense homogeneous spaces.
\end{abstract}

\maketitle

\section{Introduction}

{\it All spaces considered are separable and metrizable.}

A separable space $X$ is \emph{countable dense homogeneous} (\emph{CDH} for short) if whenever $D$ and $E$ are countable dense subsets of $X$, 
there exists a homeomorphism $h:X\to X$ such that $h[D]=E$. Using the now well-known back-and-forth argument, Cantor \cite{cantor} gave the 
first example of a CDH space: the real line. In fact, many other important spaces are CDH, e.g. the Euclidean spaces, the Hilbert cube and the 
Cantor set. Results from \cite{bennett} and \cite{ungar} provide general classes of CDH spaces that include the examples mentioned.
 In \cite{cdhdefinable} and \cite{vm} the reader can find summaries of past research and bibliography about CDH spaces.

In \cite{openprob}, Fitzpatrick and Zhou posed the following problems.

\begin{ques}\label{quesop1}
Does there exist a CDH metrizable space that is not completely metrizable?
\end{ques}

\begin{ques}\label{quesop2}
For which $0$-dimensional subsets $X$ of $\mathbb{R}$ is ${}\sp\omega X$ CDH?
\end{ques}

 Concerning these two problems, the following results have been obtained.

\begin{thm}\cite{cdhdefinable}\label{cdhdef}
Let $X$ be a separable metrizable space.
\begin{itemize}
\item If $X$ is CDH and Borel, then $X$ is completely metrizable.
\item If ${}\sp\omega X$ is CDH, then $X$ is a Baire space.
\end{itemize}
\end{thm}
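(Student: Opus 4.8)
I would prove both assertions in contrapositive form, using throughout the following soft setup. View any space $Z$ under consideration as a dense subspace of a Polish space $Y$ --- its metric completion, or its closure in ${}^{\omega}2$ when $Z$ is $0$-dimensional (the case of the filters in this paper). A self-homeomorphism of $Z$ preserves every subset of $Z$ invariant under \emph{all} self-homeomorphisms, in particular every topologically defined set, e.g.\ the set of points of $Z$ having no completely metrizable neighbourhood, or the union $M_Z$ of all open subsets of $Z$ that are meager in $Z$. Hence, if $Z$ is CDH and $A\subseteq Z$ is topologically defined, then for any two countable dense $D,E\subseteq Z$ some $h\in\mathrm{Homeo}(Z)$ has $h[D]=E$ and $h[A]=A$, so the traces $A\cap D,A\cap E$ are equivalently placed in $A$. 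To show $Z$ is \emph{not} CDH it thus suffices to exhibit such an $A$ together with $D,E$ whose traces on $A$ are inequivalent. I will also use: (i) if $A$ is topologically defined and $Z$ is CDH, then $A$ is CDH (extend a countable dense subset of $A$, disjointly from $Z\setminus A$, to a countable dense subset of $Z$); and (ii) a countable crowded metrizable space is $\mathbb Q$, and $\mathbb Q$ is not CDH, being one of its own countable dense subsets while a homeomorphism cannot collapse it.

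\noindent\textbf{First statement.} Suppose $X$ is Borel and not completely metrizable, i.e.\ not $G_\delta$ in $Y$; I show $X$ is not CDH. By Hurewicz's theorem a Borel subset of a Polish space is $G_\delta$ iff it contains no closed copy of $\mathbb Q$, so there is $Q\cong\mathbb Q$ closed in $X$, and then $\overline{Q}^{\,Y}$ is a Cantor set meeting $X$ exactly in $Q$. \emph{Case (a): $X$ is a Baire space.} Being a Borel, dense, Baire subspace of $Y$, $X$ is then comeager in $Y$ (otherwise $Y\setminus X$, having the Baire property, would be comeager on some nonempty open set, forcing a nonempty open subspace of the Baire space $X$ to be meager in itself), hence $X$ contains a dense $G_\delta$ subset $P$ of $Y$, a dense completely metrizable subspace. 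Let $D=Q\cup D'$ with $D'$ countable dense in $X$, and let $E$ be any countable dense subset of $P$. If some $h\in\mathrm{Homeo}(X)$ had $h[D]=E$, then $h[Q]\subseteq P$ would be closed in $X$, hence closed in $P$, and homeomorphic to $\mathbb Q$ --- impossible, as closed subspaces of the Polish space $P$ are Polish. So $X$ is not CDH. \emph{Case (b): $X$ is not Baire.} Then $M_X\neq\varnothing$ is a topologically defined open subspace of $X$; by (i) it is CDH; it is Borel (open in the Borel set $X$); and it is meager in itself, hence not completely metrizable. So one is reduced to showing that \emph{no nonempty Borel space $W$ that is meager in itself is CDH}. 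Such a $W$ has no isolated points; if it is countable it is $\mathbb Q$ and (ii) applies; if it is uncountable it contains a Cantor set and is again Borel and not $G_\delta$ in its completion, and here I would run the core argument of \cite{cdhdefinable}: construct countable dense $D_0,D_1\subseteq W$ for which $W\setminus D_0\not\cong W\setminus D_1$, by arranging the complements to differ in a genuine topological invariant such as $\sigma$-compactness or absolute Borel class.

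\noindent\textbf{Second statement.} Suppose ${}^{\omega}X$ is CDH; I show $X$ is Baire. The crux is that ${}^{\omega}X$ is Baire iff $X$ is: if $U\subseteq X$ is open and meager in $X$, write $U=\bigcup_n N_n$ with each $N_n$ nowhere dense; then $\pi_0^{-1}(U)=U\times{}^{\omega}X=\bigcup_n N_n\times{}^{\omega}X$ is a nonempty open subset of ${}^{\omega}X$ that is meager --- a product of a nowhere dense set with anything is nowhere dense --- indeed meager in itself. So if $X$ is not Baire, neither is ${}^{\omega}X$, and $M_{{}^{\omega}X}\neq\varnothing$ is, by (i), a CDH space meager in itself. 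Now the first statement cannot be invoked, since ${}^{\omega}X$ need not be Borel; instead I would exploit the self-similarity ${}^{\omega}X\cong{}^{\omega}X\times{}^{\omega}X$ and the fact that $U\times{}^{\omega}X$ contains the closed Cantor set ${}^{\omega}\{a,b\}$ for any two-point closed discrete $\{a,b\}\subseteq U$, to carry out inside ${}^{\omega}X$ the separating construction of the Borel case --- a countable dense set filling the trace on such a Cantor configuration against one missing it --- deriving a contradiction.

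\noindent\textbf{Where the difficulty is.} In both statements the setup is easy: exhibiting the topologically defined defect (via Hurewicz, resp.\ via the product structure) and reducing to a meager-in-itself CDH space. The hard part, which I expect to absorb most of the work, is that the defect can be \emph{everywhere} --- ${}^{\omega}\mathbb Q$ and $\mathbb Q\times{}^{\omega}2$ are nowhere locally completely metrizable, so a fixed nowhere-dense invariant $A$ is the whole space and useless --- so one must instead fabricate the two countable dense sets so that their complements differ topologically, via a delicate recursive construction controlling the traces on \emph{all} relevant Cantor-set configurations simultaneously. This is exactly where Borelness (first statement), resp.\ the countable-power structure (second statement), is essential: over a general meager CDH space, such as a $\lambda'$-set, no such topological difference is available, and such spaces are in fact CDH.
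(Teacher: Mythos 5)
The paper itself does not prove this theorem --- it is quoted from \cite{cdhdefinable} --- but the essential mechanism of the proof is on display in the paper's own Proposition \ref{mustbenonmeager}, and that is exactly the piece your proposal is missing. Your overall skeleton is sound and much of it is correct and complete: the invariance of $M_Z$ and the heredity of CDH to open invariant subspaces, the Hurewicz reduction, the fact that a nonempty meager-in-itself space is crowded, the equivalence ``$X$ Baire iff ${}^\omega X$ Baire,'' and, in particular, Case (a) of the first statement (Baire, Borel, non-Polish: a closed copy of $\mathbb Q$ cannot land inside a dense Polish subspace) is a complete, correct argument. But both statements ultimately bottom out at the same claim --- \emph{a crowded, meager-in-itself CDH space containing a Cantor set is impossible} --- and at precisely that point you defer to an unexecuted plan (``arrange the complements to differ in $\sigma$-compactness or absolute Borel class,'' ``a delicate recursive construction controlling the traces on all relevant Cantor-set configurations simultaneously''). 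That plan is both vaguer and harder than what is actually needed, and your closing paragraph concedes that this is where all the work lies. As written, Case (b) of the first statement (uncountable subcase) and the whole of the second statement are not proved.

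The missing idea is Lemma 2.1 of \cite{cdhdefinable}: \emph{every crowded space that is meager in itself has a countable dense subset which is $G_\delta$ in it} (enumerate a base $\{U_n\}$ and an increasing sequence of closed nowhere dense sets covering the space, and pick $d_n\in U_n\setminus N_n$ avoiding earlier choices; then $D\cap N_n$ is finite for each $n$, so $D=\bigcap_n\bigl((X\setminus N_n)\cup(D\cap N_n)\bigr)$ is $G_\delta$). With this in hand the finish is the one-line argument of Proposition \ref{mustbenonmeager}: let $W$ be the meager invariant open piece, let $C\subseteq W$ be a Cantor set (from the perfect set property of uncountable Borel sets in statement one; from $\{u\}\times{}^\omega\{a,b\}$ in statement two --- your instinct about the product structure is exactly right), let $D$ be countable dense with $D\cap C$ dense in $C$, let $E$ be the countable dense $G_\delta$, and observe that $h[D]=E$ forces $E\cap h[C]=h[D\cap C]$ to be a countable dense relative $G_\delta$ subset of the Cantor set $h[C]$, which is simultaneously meager and comeager in $h[C]$ --- a contradiction. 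Note that this lemma needs no definability hypothesis, which is why the second statement goes through even though ${}^\omega X$ need not be Borel; no construction of non-homeomorphic complements is required. One further small caution: your plan to make the two dense sets' complements non-homeomorphic, while a legitimate strategy in principle, is not obviously realizable in a general meager CDH candidate, so it should not be presented as a reduction of the problem.
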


\begin{thm}\cite{lambda}\label{cdhlambda}
There is a CDH set of reals $X$ of size $\omega_1$ that is a $\lambda$-set\footnote{Recall that a set of reals $X$ is a \emph{$\lambda$-set} if every countable 
subset of $X$ is a relative $G_\delta$ set.} and thus, not completely metrizable.
\end{thm}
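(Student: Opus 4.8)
The plan is a transfinite recursion of length $\omega_1$. I will build $X$ as an increasing, continuous union $X=\bigcup_{\alpha<\omega_1}X_\alpha$ of countable dense subsets of the Cantor set $2^\omega$, together with enough auxiliary data to force simultaneously that $X$ is a $\lambda$-set and that $X$ is CDH.

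Two reductions organize the construction. For the $\lambda$-property: since $\omega_1$ has uncountable cofinality every countable subset of $X$ lies in some $X_\alpha$; in a countable metrizable space every subset is a $G_\delta$ (its complement being a countable union of closed singletons, hence $F_\sigma$); and being a relative $G_\delta$ is transitive. Hence it suffices that each $X_\alpha$ be a relative $G_\delta$ in $X$, i.e. that there be an $F_\sigma$ set $S_\alpha\subseteq 2^\omega$ with $X\setminus X_\alpha\subseteq S_\alpha$ and $S_\alpha\cap X_\alpha=\emptyset$. I would produce the $S_\alpha$ as in the classical ZFC construction of a $\lambda$-set of size $\aleph_1$: along a $\subseteq$-decreasing tower of dense meager $F_\sigma$ sets, for instance sets of the form $\{g\ge^* f\}$ with $f$ ranging over a $\le^*$-increasing $\omega_1$-sequence in $\omega^\omega$, every proper initial intersection of which stays dense, so that at stage $\alpha$ there is room to place $x_\alpha$ inside $\bigcap_{\beta<\alpha}S_\beta$ and outside $X_\alpha$. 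For CDH: recall that the back-and-forth proving $2^\omega$ is CDH matches two countable dense sets by a chain of \emph{localized} refinements of finite clopen partitions, each step requiring only, inside a clopen box $C$, a self-homeomorphism of $2^\omega$ supported in $C$ carrying a prescribed clopen subset to another and a prescribed point to another. There are only $\aleph_1$ such tasks $(C,P,Q,x,y)$ with $P,Q$ clopen subsets of $C$ and $x\in P\cap X$, $y\in Q\cap X$; so with a length-$\omega_1$ bookkeeping I would, at stage $\alpha$, adjoin to a growing group $\G$ of homeomorphisms of $2^\omega$ a generator $h_\alpha$ solving the $\alpha$-th task, arrange that $X$ end up $\G$-invariant, and throw in extra points to keep each $X_\alpha$ countable and dense.

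If this goes through, then for any two countable dense $D,E\subseteq X$ one runs the $2^\omega$ back-and-forth with every local surgery supplied by $\G$, and — provided the refinements are organized so that the resulting homeomorphism still maps $X$ onto $X$ — obtains a self-homeomorphism of $X$ taking $D$ to $E$, so $X$ is CDH. Being an uncountable $\lambda$-set, $X$ is then not completely metrizable, because every uncountable completely metrizable space contains a copy of $2^\omega$, which is not a $\lambda$-set (a countable dense subset of $2^\omega$ is not a relative $G_\delta$, by the Baire category theorem).

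The main obstacle is that the two programs interfere, and that the clause ``provided the refinements preserve $X$'' is doing real work. On one side, once $h_\alpha$ is adjoined its orbit of the existing points is forced, and those forced points must still lie inside every $S_\beta$ with $\beta<\alpha$ from the $\lambda$-tower; so $h_\alpha$ cannot be an arbitrary solution of its task but must also respect that coherent system of $F_\sigma$ sets, essentially mapping the tower into itself — which compels one to build $X$, the $\lambda$-tower, and the homeomorphisms in a single interleaved recursion, with the tower chosen homogeneous enough that a compatible $h_\alpha$ always exists. On the other side, the homeomorphism matching $D$ to $E$ is only obtained as a limit of the finite-stage surgeries; each finite stage preserves $X$, but since $X$ is not closed in $2^\omega$ the limit need not. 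Repairing this — so that the matching map is an honest self-homeomorphism of $X$ — seems to require more than a merely finitely generated group: either a \emph{closed} subgroup of the homeomorphism group of $2^\omega$ leaving $X$ invariant, or an intrinsic ``every finite partial homeomorphism between dense subsets extends'' homogeneity of $X$, maintained throughout. Getting the $\lambda$-tower, the space, and such a homeomorphism structure to coexist is the crux, and is essentially the whole content of the theorem.
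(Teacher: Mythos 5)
Your outline has the right overall architecture, and the two easy halves are correct: the reduction of the $\lambda$-property to making each $X_\alpha$ a relative $G_\delta$ via a decreasing tower of dense meager $F_\sigma$ sets, and the closing observation that an uncountable $\lambda$-set cannot be completely metrizable (since it would contain a copy of $2^\omega$, whose countable dense subsets are not relative $G_\delta$'s by Baire category). But the CDH half is a plan, not a proof, and you say so yourself: ``getting the $\lambda$-tower, the space, and such a homeomorphism structure to coexist is the crux.'' Two concrete obstructions block the bookkeeping you propose. First, cardinality: the objects that ultimately must be matched are pairs $(D,E)$ of countable dense subsets of $X$, and there are $\aleph_1^{\aleph_0}=\max(\aleph_1,2^{\aleph_0})$ of these, so outside of CH they cannot be enumerated in a recursion of length $\omega_1$; the theorem, however, is a ZFC theorem. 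Your substitute list of $\aleph_1$ many ``local tasks'' $(C,P,Q,x,y)$ is enumerable, but solving all of them only guarantees that each \emph{finite} stage of a back-and-forth can be realized by a generator preserving $X$. Second, and this is the step that actually fails: the homeomorphism witnessing $h[D]=E$ is the inverse limit of infinitely many such local surgeries, and since $X$ is not closed in $2^\omega$ (indeed, being CDH and not completely metrizable it cannot even be Borel, by Theorem \ref{cdhdef}), a limit of homeomorphisms each preserving $X$ need not preserve $X$. Invariance of $X$ under the group generated by your $h_\alpha$'s does not pass to the closure of that group, which is exactly where the CDH witnesses live; neither of the two repairs you mention (a closed invariant subgroup, or a maintained extension property for finite partial homeomorphisms) is carried out, and producing either one is the whole difficulty.

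The cited proof of Farah, Hru\v s\'ak and Mart\'inez Ranero resolves precisely this point with a \emph{uniform} preservation lemma rather than a generator-by-generator bookkeeping: roughly, one exploits the group structure of $2^\omega$, fixes a countable dense subgroup, builds $X$ as a coherent $\omega_1$-tower of countable dense sets compatible with its cosets, and proves a single back-and-forth lemma producing, for the relevant pairs of countable dense subsets of $X$, an autohomeomorphism of $2^\omega$ that matches them \emph{while respecting the tower and coset structure} -- so that preservation of $X$ is built into the limit rather than checked after the fact, and so that the $\aleph_1$-indexed recursion only has to handle the tower, not all pairs $(D,E)$. Supplying that lemma, and verifying its compatibility with the $\lambda$-tower of $F_\sigma$ sets, is the content your proposal still lacks: you have correctly identified the difficulty but not overcome it.
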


The techniques used in the proof of Theorem \ref{cdhlambda} produce spaces that are not Baire spaces, so by Theorem \ref{cdhdef} they 
cannot answer Question \ref{quesop2}.

There is a natural bijection between the Cantor set $\csf$ and $\css$ via characteristic functions. In this way we may identify $\css$ with the Cantor set. 
Thus, any subset of $\css$ can be thought of as a separable metrizable space.

Recall that a set $\F\subset\css$  is a \emph{filter} (on $\omega$) if (a) $X\in\F$ and $\emptyset\notin\F$; (b) if $A,B\in\F$, then $A\cap B\in\F$ and 
(c) if $A\in\F$ and $A\subset B\subset\omega$, then $B\in\F$. For $\mathcal{X}\subset\css$, let $\mathcal{X}\sp\ast=\{x\subset\omega:\omega\setminus 
x\in\mathcal{X}\}$. Then a set $\I\subset\css$ is an \emph{ideal} if and only if $\I\sp\ast$ is a filter. We will assume that all filters contain the 
Fr\'echet filter $\{x\subset\omega:\omega\setminus x\textrm{ is finite}\}$ (dually, all ideals contain the set of finite subsets of $\omega$). 
An \emph{ultrafilter} is a maximal filter with respect to inclusion. 

If $A,B$ are sets, $A\subset\sp\ast B$ means that $A\setminus B$ is finite. A filter $\F$ is called a \emph{$P$-filter} if given 
$\{X_n:n<\omega\}\subset\F$ there exists $X\in\F$ such that $X\subset\sp\ast X_n$ for all $n<\omega$. Such an $X$ is called a 
\emph{pseudo-intersection} of $\{X_n:n<\omega\}$. Dually, $\I$ is a \emph{$P$-ideal} if $\I\sp\ast$ is a $P$-filter. An ultrafilter that is a 
$P$-filter is called a \emph{$P$-point}.

Considering ultrafilters as topological spaces, the following results were obtained recently by Medini and Milovich.

\begin{thm}\cite[Theorems 15, 21, 24, 41, 43 and 44]{medinimilovich}\label{ultrathm}
Assume $MA(countable)$. Then there are ultrafilters $\U\subset\css$ with any of the following properties: $(a)$ $\U$ is CDH and a $P$-point, $(b)$ $\U$ is CDH and not a $P$-point, $(c)$ $\U$ is not CDH and not a $P$-point, and $(d)$ ${}\sp\omega\U$ is CDH.
\end{thm}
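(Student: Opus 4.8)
The plan is to split the four items according to whether they can be derived from the main theorem of this paper (already announced in the abstract) or demand a direct construction. Throughout I use that $MA(countable)$ is equivalent to $\mathrm{cov}(\mathcal{M})=\mathfrak{c}$, so that the ultrafilters can be built by a transfinite recursion of length $\mathfrak{c}$ in which, at each stage, fewer than $\mathfrak{c}$ genericity requirements (each encoded by a Cohen-type condition on a countable poset) are satisfied. In particular $MA(countable)$ gives $\mathfrak{d}=\mathfrak{c}$, hence $P$-points exist by Ketonen's theorem. For items $(a)$ and $(d)$ I would fix one such $P$-point $\U$. Since every ultrafilter is non-meager (a meager filter is feeble by the Talagrand--Jalali-Naini theorem, and no ultrafilter is feeble), $\U$ is a non-meager $P$-filter, and the main theorem yields at once that both $\U$ and ${}\sp\omega\U$ are CDH. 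Thus $(a)$ and $(d)$ are obtained simultaneously from the same $\U$.

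Item $(b)$ is genuinely different: an ultrafilter that happens to be a $P$-filter is by definition a $P$-point, so the main theorem cannot produce a CDH ultrafilter that fails to be a $P$-point, and one must argue that CDH is strictly weaker than the non-meager-$P$ hypothesis. Two routes suggest themselves. The first exploits that being a $P$-point need not be a topological invariant of the subspace $\U\subseteq\css$: I would attempt to build, under $MA(countable)$, a non-$P$-point ultrafilter homeomorphic to a fixed CDH $P$-point, so that CDH transfers for free. The second, more self-contained route passes to a Fubini sum $\U\otimes\U$ on $\omega\times\omega\cong\omega$; this is again an ultrafilter (hence non-meager) but is never a $P$-point, and I would establish CDH for it by a bespoke back-and-forth that matches countable dense sets fibrewise, using that the base and each fibre is a copy of the $P$-point $\U$.

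For $(c)$ I would drop reductions and construct the ultrafilter directly by recursion, keeping two goals alive. To kill the $P$-point property I insert a decreasing sequence $\langle X_n:n<\omega\rangle\subseteq\U$ with no pseudo-intersection in $\U$. To force non-CDH I arrange two countable dense subsets $D,E\subseteq\U$ of different topological character, the cleanest target being to make $D$ a relative $G_\delta$ set in $\U$ while $E$ is not. Because every self-homeomorphism of $\U$ carries a relatively $G_\delta$ countable dense set to another such set, no homeomorphism can send $D$ onto $E$, so $\U$ fails to be CDH. Using $MA(countable)$ I would meet cofinally often the conditions extending the filter to an ultrafilter while protecting this asymmetry.

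The main obstacle is $(c)$, specifically the control of homeomorphisms. Self-homeomorphisms of an ultrafilter subspace need not be \emph{trivial}, that is, induced by an almost-permutation of $\omega$ composed with a finite translation $x\mapsto x\triangle a$; hence a naive diagonalization against every homeomorphism is circular, since the available homeomorphisms depend on the very object being built. The resolution, and the crux of the plan, is to encode the distinction between $D$ and $E$ as a topological invariant, $G_\delta$-ness of a countable dense set, which is automatically preserved by all homeomorphisms: then I only need the recursion to preserve this invariant rather than to defeat each homeomorphism individually, and I must still verify that an ultrafilter can indeed carry a $G_\delta$ countable dense set alongside a non-$G_\delta$ one. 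For $(b)$ the analogous difficulty is showing that CDH survives the loss of the $P$-property, and I expect the fibrewise back-and-forth for $\U\otimes\U$ to be the technical heart of that argument.
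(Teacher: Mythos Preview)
The paper does not prove this statement: Theorem~\ref{ultrathm} is quoted verbatim from Medini and Milovich, with explicit pointers to their Theorems 15, 21, 24, 41, 43 and 44, and no argument is supplied here. So there is no in-paper proof to compare your proposal against. Your derivation of $(a)$ and $(d)$ from Theorem~\ref{bigthm} is logically valid and indeed shorter than the original constructions, but be aware that it inverts the chronology: in the paper Theorem~\ref{ultrathm} is prior work motivating Theorem~\ref{bigthm}, not a corollary of it.

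Your plan for $(c)$, however, has a genuine gap. You propose to arrange a countable dense $D\subseteq\U$ that is relatively $G_\delta$ in $\U$, alongside a countable dense $E$ that is not. But any ultrafilter extending the Fr\'echet filter is non-meager and homogeneous, hence a Baire space, and it has no isolated points. In a crowded Baire space a dense $G_\delta$ set is comeager while any countable set is meager, so a countable dense $G_\delta$ set is impossible. (This is exactly the mechanism used in Proposition~\ref{mustbenonmeager}, read in the opposite direction.) Thus the $G_\delta$/non-$G_\delta$ distinction cannot separate two countable dense subsets of an ultrafilter, and you need a different invariant or a different strategy. Your worry that ``diagonalising against all homeomorphisms is circular'' is also too pessimistic: by Lavrentiev's theorem every autohomeomorphism of $\U$ extends to a homeomorphism between $G_\delta$ subsets of $2^\omega$, and there are only $\mathfrak c$ such partial maps, so under $\mathrm{cov}(\mathcal M)=\mathfrak c$ one can list and defeat them during the recursion. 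That, rather than a topological invariant, is the route actually taken in the cited source.
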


Since ultrafilters do not even have the Baire property (\cite[4.1.1]{bart}), Theorem \ref{ultrathm} gives a consistent answer to Question \ref{quesop1} and a consistent example for Question \ref{quesop2}.

The purpose of this note is to extend these results on ultrafilters to a wider class of filters on $\omega$. In particular, we prove the following result, 
which answers Questions 3, 5 and 11 of \cite{medinimilovich}.

\begin{thm}\label{bigthm}
Let $\F$ be a non-meager $P$-filter on $\css$ extending the Fr\'echet filter. Then both $\F$ and ${}\sp\omega\F$ are CDH.
\end{thm}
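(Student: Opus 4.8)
The plan is to isolate one combinatorial fact — that \emph{every non-meager $P$-filter is topologically homogeneous} — and to show that it forces $\F$ and ${}\sp\omega\F$ to be strongly locally homogeneous with arbitrarily small clopen supports, after which a standard Cantor back-and-forth with clopen-supported homeomorphisms finishes the proof (both spaces are zero-dimensional, separable, metrizable and crowded, so this is the natural framework). The reduction to homogeneity is purely topological: since $\F$ is dense in $\css$, each basic clopen set $[s]=\{A:A\restriction\operatorname{dom}s=s\}$ ($s$ a finite partial function $\omega\to 2$) meets $\F$ in a copy of $\F\restriction(\omega\setminus\operatorname{dom}s)$ via $Y\mapsto Y\setminus\operatorname{dom}s$, and $\F\restriction(\omega\setminus\operatorname{dom}s)$, transported to $\omega$, is again a non-meager $P$-filter (restriction to a co-finite set trivially preserves being a $P$-filter, and preserves non-meagerness by Talagrand's characterization below). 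Hence a homeomorphism of $\F\restriction(\omega\setminus\operatorname{dom}s)$ moving a prescribed point to another can be conjugated and extended by the identity on $\F\setminus[s]$ to a homeomorphism of $\F$ supported on $[s]$; taking $\operatorname{dom}s$ a long initial segment makes the support small. Thus $\F$ is strongly locally homogeneous with arbitrarily small clopen supports, and — localizing finitely many coordinates at a time and gluing over clopen boxes — so is ${}\sp\omega\F$.

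The core is the homogeneity statement. Let $\F$ be a non-meager $P$-filter and $X,Y\in\F$. Setting $Z=X\cap Y\in\F$ and composing a homeomorphism moving $X$ to $Z$ with the inverse of one moving $Y$ to $Z$, we may assume $Z\subseteq X$; if $X\setminus Z$ is finite the finite modification $W\mapsto W\triangle(X\setminus Z)$ works, so assume $X\setminus Z$ infinite. Here enters \emph{Talagrand's characterization}: a filter containing the Fr\'echet filter is meager iff some partition of $\omega$ into finite sets $\{I_n:n<\omega\}$ has every filter member meeting all but finitely many $I_n$; equivalently, non-meagerness means that for every such partition some $A\in\F$ misses infinitely many blocks. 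Together with the $P$-filter property this gives the gadget used throughout: for every $\subseteq$-decreasing $\langle X_k:k<\omega\rangle$ in $\F$ and every partition $\{I_n:n<\omega\}$ of $\omega$ into finite sets there is $A\in\F$ with $A\subseteq^{\ast}X_k$ for all $k$ and $A\cap I_n=\emptyset$ for infinitely many $n$ (take a pseudo-intersection $X^{\ast}\in\F$ of the $X_k$, restrict $\F$ to $X^{\ast}$ — still a non-meager $P$-filter — apply Talagrand, lift back). Using it, one builds $h\colon\F\to\F$ with $h(X)=Z$ as an infinite composition $\dots\circ g_1\circ g_0$ where $g_n$, supported on a clopen set $U_n$, ``transfers one more block of $X\setminus Z$ off toward infinity,'' the gadget ensuring both that this stays inside $\F$ and that the $U_n$ may be chosen with $\sum_n\operatorname{diam}U_n<\infty$, so the composition converges to a homeomorphism of $\F$. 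Note $h$ is not the restriction of any homeomorphism of $\css$, and in general no permutation $\sigma$ of $\omega$ has $\sigma[X]=Z$ and $\sigma(\F)=\F$, which is why the construction must be done by hand.

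Granting this, the back-and-forth is routine. Given countable dense $D=\{d_n:n<\omega\}$ and $E=\{e_n:n<\omega\}$ in $\F$ (or in ${}\sp\omega\F$), build $\operatorname{id}=h_0,h_1,\dots$ with $h_{n+1}=k_n\circ h_n$, where $k_n$ — furnished by strong local homogeneity — is supported on a small clopen $U_n$ disjoint from the finitely many already-committed image points; on a ``forth'' step $U_n$ is chosen around $h_n(d_m)$ (least uncommitted $d_m$) so as to miss the committed points but, by density, to contain an uncommitted point of $E$, which becomes the image of $d_m$; ``back'' steps are symmetric. With $\operatorname{diam}U_n\to 0$ fast enough, $h=\lim_n h_n$ and its inverse are well-defined homeomorphisms with $h[D]=E$, so $\F$ is CDH; the same run inside ${}\sp\omega\F$ gives that ${}\sp\omega\F$ is CDH. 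One also checks — consistently with Theorem~\ref{cdhdef} — that ${}\sp\omega\F$ is a Baire space, using that $\F$ is non-meager in $\css$ (for a filter, non-meagerness and being a Baire space coincide) and that this survives the countable power when $\F$ is a non-meager $P$-filter.

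The step I expect to be hardest is the infinite-difference case of the homogeneity statement: producing an autohomeomorphism of a non-meager $P$-filter carrying a point onto a proper subset with infinite complement. No such homeomorphism comes from $\css$ or from a permutation of $\omega$, so it must be assembled as a limit of local moves, and the subtle point is certifying that the limit is an honest homeomorphism \emph{of} $\F$ — above all that it maps $\F$ onto $\F$ — which is precisely where both hypotheses, non-meagerness and the $P$-property, are indispensable, through the gadget above. The rest is bookkeeping: keeping $\sum_n\operatorname{diam}U_n$ finite during the back-and-forth, and organizing the coordinatewise argument for ${}\sp\omega\F$ so that supports continue to shrink.
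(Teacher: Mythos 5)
There is a genuine gap, and it is fatal to the architecture of your argument. Your plan reduces everything to the statement that $\F$ is (strongly locally) homogeneous and then runs a back-and-forth with clopen-supported moves. But homogeneity, and indeed strong local homogeneity with arbitrarily small clopen supports, is \emph{free for every filter}: if $X,Y\in\F$ then $X\bigtriangleup Y$ belongs to the dual ideal $\I=\F\sp\ast$, and the translation $w\mapsto w\bigtriangleup(X\bigtriangleup Y)$ is an autohomeomorphism of $\css$ carrying $\F$ onto $\F$ and $X$ to $Y$; restricting such a translation to a basic clopen set and extending by the identity gives the local version. (In particular the step you single out as the hardest --- moving a point onto a subset with infinite complement, allegedly witnessed by no homeomorphism of $\css$ --- is a one-line translation, and your ``gadget'' is not needed for it.) Since every ultrafilter is non-meager and, by Theorem \ref{ultrathm}$(c)$, consistently some ultrafilters are not CDH, strong local homogeneity plus non-meagerness provably does \emph{not} imply CDH. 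So whatever your back-and-forth does, it cannot be ``routine'': if it worked from SLH alone it would prove a consistently false statement.

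The concrete failure point is the limit of the back-and-forth. Your $k_n$ are homeomorphisms of $\F$ only (not of $\css$), so $h=\lim_n h_n$ is a priori a uniformly convergent limit taking values in the completion $\css$; without complete metrizability there is no reason the limit maps $\F$ into, let alone onto, $\F$. This is exactly the obstruction behind Theorem \ref{cdhdef}, and shrinking $\operatorname{diam}U_n$ does not address it, since the supports must chase dense sets and hence accumulate everywhere. The paper's proof is organized precisely to dodge this: it builds a \emph{single} homeomorphism of all of $\css$ as an inverse limit of finite permutations, arranges via the Laflamme tree argument (Lemma \ref{combinatorics}) that there is one fixed $x\in\I$ with $d\bigtriangleup h(d)\subset x$ for every $d$ in the dense sets, and then Lemma \ref{restriction} certifies globally that $h[\I]=\I$. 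That is where non-meagerness and the $P$-property actually enter --- in matching the two dense sets inside $\mathcal{P}(x)$ --- not in establishing homogeneity. Finally, your treatment of ${}\sp\omega\F$ (``localizing finitely many coordinates and gluing'') inherits the same gap; the paper instead reduces it to the filter case outright by showing ${}\sp\omega\F$ is homeomorphic to a non-meager $P$-filter (Lemma \ref{omegapower}).
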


It is known that non-meager filters do not have the Baire property (\cite[4.1.1]{bart}). However, the existence of non-meager $P$-filters is an open question 
(in ZFC). It is known that the existence of non-meager $P$-filters follows from $\mathbf{cof}([\mathfrak{d}]\sp\omega)=\mathfrak{d}$ 
(where $\mathfrak{d}$ is the dominating number, see \cite[1.3.A]{bart}).  Hence, if all $P$-filters are meager then there is an inner model with 
large cardinals. See \cite[4.4.C]{bart} or \cite{just-et-all} for a detailed description of this problem. 

Note that every CDH filter has to be non-definable in the following sense.

\begin{propo}\label{mustbenonmeager}
Let $\F$ be a filter on $\css$ extending the Fr\'echet filter. If one of $\F$ or ${}\sp\omega\F$ is CDH, then $\F$ is non-meager.
\end{propo}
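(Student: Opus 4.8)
The plan is to prove the contrapositive: if $\F$ is meager, then neither $\F$ nor ${}^\omega\F$ is CDH. The starting point is that, since $\F$ extends the Fr\'echet filter, $\F$ is a dense subset of $\css$; being moreover meager in $\css$, it is meager in itself (a dense meager subspace of a Polish space is always meager in itself), and hence $\F$ is not a Baire space. The case of ${}^\omega\F$ is then immediate: by Theorem \ref{cdhdef} applied with $X=\F$, if ${}^\omega\F$ were CDH then $\F$ would be a Baire space, a contradiction.

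So the real content is showing that $\F$ itself is not CDH, and here Theorem \ref{cdhdef} is of no direct help, since a meager filter need not be Borel and non-Baire CDH spaces do exist (Theorem \ref{cdhlambda}). I would use two further features of $\F$. First, $\F$ is closed under finite modifications of its elements and has no isolated points, so the group of autohomeomorphisms of $\css$ induced by finite symmetric differences acts on $\F$ with every orbit dense; in particular the cofinite subsets of $\omega$ form one countable dense set $D\subseteq\F$. Second, by Talagrand's characterization of meagerness for filters (see \cite{bart}) there is a partition $\omega=\bigsqcup_{n<\omega}I_n$ into finite intervals, which we may take with $|I_n|\ge 2$, such that every element of $\F$ meets all but finitely many $I_n$; setting $C_N=\{A\in\F:A\cap I_n\ne\emptyset\text{ for every }n\ge N\}$ one gets an increasing exhaustion of $\F$ by closed nowhere dense sets whose complements $\F\setminus C_N$ are countable increasing unions of clopen subsets of $\F$. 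Assuming $\F$ is CDH, I would then compare $D$ with a second countable dense set $E$ obtained by diagonalizing against this exhaustion so that $E\cap C_N$ is finite for every $N$, whereas $D$ meets $C_0$ infinitely often (which it does, since $|I_n|\ge 2$), and argue that no autohomeomorphism of $\F$ can carry $D$ onto $E$.

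The main obstacle is that an autohomeomorphism of $\F$ need not respect the interval partition or the exhaustion $\{C_N\}$, nor carry the special closed nowhere dense sets $C_N$ to sets of the same form, so the property separating $D$ from $E$ is not manifestly topologically invariant. To get around this I would exploit the strong homogeneity of $\F$ (every nonempty clopen subset is homeomorphic to $\F$) together with the resulting self-similar decomposition $\F\cong C_0\sqcup\bigsqcup_{n<\omega}W_n$, in which each $W_n$ is a clopen copy of $\F$ and $C_0$ is exactly the nowhere dense set at which cofinitely many of the $W_n$ accumulate, and show that this decomposition is canonical enough, modulo the homeomorphism group, that any autohomeomorphism must preserve the relevant accumulation behaviour at the ``boundary'' $C_0$ --- in effect proving that no filter which is meager in itself can be CDH, which together with the first paragraph finishes the proof. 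A shortcut worth attempting is instead to show that $\F$ CDH forces ${}^\omega\F$ CDH (again using strong homogeneity of filters), which would reduce this case to the one already settled.
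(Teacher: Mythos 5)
Your reduction of the ${}^\omega\F$ case is correct and is essentially what the paper does: a dense meager subset of $\css$ is meager in itself, hence not Baire, so Theorem \ref{cdhdef} rules out ${}^\omega\F$ being CDH. The problem is the case of $\F$ itself, where your argument has a genuine gap that you yourself flag but do not close. The property you use to separate $D$ from $E$ --- the intersection pattern with the particular exhaustion $\F=\bigcup_N C_N$ coming from a Talagrand partition --- is not a topological invariant of the pair $(\F,D)$: an autohomeomorphism of $\F$ carries $\{C_N\}$ to some other exhaustion by closed nowhere dense sets, and nothing forces that exhaustion to be of the form ``meets every $I_n$ beyond $N$'' for any interval partition. (Already the translations $y\mapsto y\bigtriangleup a$, which generate a dense set of autohomeomorphisms of $\F$, scramble which elements meet which blocks.) Your two proposed repairs do not fill the hole: the assertion that the decomposition $\F\cong C_0\sqcup\bigsqcup_n W_n$ is ``canonical enough modulo the homeomorphism group'' is precisely the missing content and is stated without any argument, while the ``shortcut'' that $\F$ CDH implies ${}^\omega\F$ CDH is not a known implication and would itself be a substantial theorem (the paper has to prove the two statements separately even for non-meager $P$-filters, via Lemma \ref{omegapower}). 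You also do not address the degenerate case $\F=$ Fr\'echet filter, where your Cantor-set-free picture changes (there the point is simply that a countable crowded space is never CDH, since the whole space is a countable dense subset of itself that cannot be mapped onto a proper one).

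What is needed, and what the paper supplies, is a distinguishing property that \emph{is} preserved by homeomorphisms: being a relative $G_\delta$. Since $\F$ is meager in itself, \cite[Lemma 2.1]{cdhdefinable} produces a countable dense $E\subseteq\F$ that is $G_\delta$ in $\F$; on the other side, if $\F$ is not the Fr\'echet filter it contains a copy $C=\{y:x\subseteq y\subseteq\omega\}$ of the Cantor set, and one takes $D$ countable dense in $\F$ with $D\cap C$ dense in $C$. A homeomorphism $h$ of $\F$ with $h[D]=E$ would make $E\cap h[C]=h[D\cap C]$ a countable dense relative $G_\delta$ subset of the Cantor set $h[C]$, contradicting the Baire category theorem. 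Note that your set $E$ with $E\cap C_N$ finite for all $N$ is in fact very close to being such a $G_\delta$ set (that is essentially how the cited lemma is proved), so you were circling the right object; the step you are missing is to replace the non-invariant ``finite trace on the $C_N$'' by the invariant ``relative $G_\delta$'', and then to import a subspace (the Cantor set copy) on which countable dense $G_\delta$ sets provably cannot exist.
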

\begin{proof}
If ${}\sp\omega{\F}$ is CDH, then $\F$ is non-meager by \cite[Theorem 3.1]{cdhdefinable}. Assume that $\F$ is CDH. If $\F$ is the Fr\'echet filter, 
then $\F$ is countable, hence not CDH. If $\F$ is not the Fr\'echet filter, there exists $x\in\F$ such that $\omega\setminus x$ is infinite. 
Thus, $C=\{y:x\subset y\subset\omega\}$ is a copy of the Cantor set contained in $\F$.  

If $\F$ were meager, we arrive at a contradiction as follows: Let $D\subset \F$ be a countable dense subset of $\F$ such that $D\cap C$ is dense in $C$. 
Since $\F$ is meager in itself, by \cite[Lemma 2.1]{cdhdefinable}, there is a countable dense subset $E$ of $\F$ that is a $G_\delta$ set relative to $\F$. 
Let $h:\F\to \F$ be a homeomorphism such that $h[D]=E$. Then $h[D\cap C]$ is a countable dense subset of the Cantor set $h[C]$ that is a relative $G_\delta$ 
subset of $h[C]$, which is impossible. So $\F$ is non-meager and the proof is complete.
\end{proof}

Notice that $(\css,\bigtriangleup,\emptyset)$ is a topological group (where $A\bigtriangleup B$ denotes the symmetric difference of $A$ and $B$) 
as it corresponds to addition modulo $2$ in $\csf$. Given a filter $\F\subset\css$, the dual ideal $\F\sp\ast$ is homeomorphic to $\F$ by means of 
the map that sends each subset of $\omega$ to its complement. Notice that $\emptyset\in\F\sp\ast$ and $\F\sp\ast$ is closed under $\bigtriangleup$. 
Moreover, for each $x\in\css$, the function $y\mapsto y\bigtriangleup x$ is a autohomeomorphism of $\css$. From this it is easy to see that $\F$ is 
homogeneous. Thus, by \cite[Proposition 3]{medinimilovich}, ``non-meager'' in Proposition \ref{mustbenonmeager} can be replaced by ``Baire space''.

In \cite[Theorem 1.2]{marciszewski} it is proved that a filter $\F$ is hereditarily Baire if and only if $\F$ is a non-meager $P$-filter. 
By Theorem \ref{ultrathm}, it is consistent that not all CDH ultrafilters are $P$-points so it is consistent that there are CDH filters 
that are not hereditarily Baire. These observations answer Question 4 in \cite{medinimilovich}. 

Recall that Theorem \ref{ultrathm} also shows that it is consistent that there exist non-CDH non-meager filters.

\begin{ques}
Is there a combinatorial characterization of CDH filters?
\end{ques}

\begin{ques}
Is there a CDH filter (ultrafilter) in ZFC? Is there a non-CDH and non-meager filter (ultrafilter) in ZFC?
\end{ques}

\section{Proof of Theorem \ref{bigthm}}

For any set $X$, let $[X]\sp{<\omega}$ and $[X]\sp\omega$ denote the set of its finite and countably infinite subsets, respectively. 
Also ${}\sp{<\omega}X=\bigcup\{{}\sp{n}X:n<\omega\}$.

Since any filter is homeomorphic to its dual ideal, we may alternatively speak about a filter or its dual ideal. In particular, the 
following result is better expressed in the language of ideals. Its proof follows from \cite[Lemma 20]{medinimilovich}, we include 
it for the sake of completeness.

\begin{lemma}\label{restriction}
Let $\I\subset\css$ be an ideal, $f:\css\to\css$ a continuous function and $D$ a countable dense subset of $\I$. If there exists $x\in\I$ 
such that $\{d\bigtriangleup f(d):d\in D\}\subset\mathcal{P}(x)$, then $f[\I]=\I$.
\end{lemma}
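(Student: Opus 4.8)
The plan is to exploit the topological group structure of $\css$ together with continuity of $f$ and the back-and-forth flavour already implicit in \cite[Lemma 20]{medinimilovich}. First I would observe that since $\I$ is closed under $\bigtriangleup$ and under taking subsets, $\mathcal{P}(x) \subseteq \I$, so the hypothesis says precisely that $d \bigtriangleup f(d) \in \I$ for every $d \in D$; equivalently, $f(d) \in \I$ iff $d \in \I$ for each $d\in D$, and in fact $f(d) \bigtriangleup d$ lands in the fixed small ideal $\mathcal{P}(x)$. The goal $f[\I] = \I$ splits into two inclusions, and I would aim to prove the single statement: for every $z \in \css$, $f(z) \bigtriangleup z \in \mathcal{P}(x)$, i.e. $f(z) \subseteq\sp* z \cup x$ and $z \subseteq\sp* f(z)\cup x$ — actually more simply $f(z)\bigtriangleup z \subseteq x$. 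Wait: that need not hold for all $z$; it holds on the dense set $D$, and I only need it to conclude membership in $\I$. So the right target is: for every $z \in \I$, $f(z) \in \I$, and for every $z \in \css \setminus \I$, $f(z) \notin \I$.

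The key step is a density/continuity argument. Fix $z \in \I$. I would like to find some $d \in D$ with $f(z) \bigtriangleup z$ controlled by $f(d)\bigtriangleup d$ together with $z \bigtriangleup d$. The natural move is to use that the map $y \mapsto y \bigtriangleup a$ is a homeomorphism of $\css$ for fixed $a$, and that $f$ is continuous: given $z$, pick a basic clopen neighbourhood $U$ of $z$ determined by a finite condition; by density choose $d \in D \cap U$ so that $d$ and $z$ agree on a large finite initial segment (so $z \bigtriangleup d$ is "concentrated above $n$"), and by continuity of $f$ shrink $U$ so that $f(d)$ and $f(z)$ also agree below $n$. Then on the finite set $\{0,\dots,n-1\}$ we have $f(z)\bigtriangleup z = f(d) \bigtriangleup d$, which is $\subseteq x$; and above $n$ there is no control, but "above $n$" is a finite-to-cofinite issue only if we can let $n \to \infty$. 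The honest route, matching \cite[Lemma 20]{medinimilovich}: build an increasing sequence $n_0 < n_1 < \cdots$ and $d_k \in D$ with $d_k \restriction n_k = z \restriction n_k$ and $f(d_k) \restriction n_k = f(z) \restriction n_k$, which forces $f(z) \bigtriangleup z \restriction n_k = f(d_k)\bigtriangleup d_k \restriction n_k \subseteq x$ for every $k$, hence $f(z) \bigtriangleup z \subseteq x$. Therefore $f(z) = z \bigtriangleup (f(z)\bigtriangleup z) \in \I$ since $z \in \I$ and $f(z)\bigtriangleup z \in \mathcal{P}(x) \subseteq \I$ and $\I$ is closed under $\bigtriangleup$. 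Symmetrically, if $z \notin \I$ then $f(z) = z \bigtriangleup (f(z)\bigtriangleup z) \notin \I$, else $z = f(z) \bigtriangleup (f(z)\bigtriangleup z) \in \I$. This gives $f[\css\setminus\I] \subseteq \css\setminus\I$ and $f[\I]\subseteq\I$; combined with the fact that $f$ restricted appropriately is onto — here I would note $f[\I] = \I$ follows because for any $w \in \I$, $w \bigtriangleup x \in \I$ and one checks $f$ hits it, or more cleanly, the relation $f(z)\bigtriangleup z \subseteq x$ for all $z$ shows $f$ maps $\I$ bijectively onto $\I$ with inverse "$z \mapsto$ the unique preimage", since $z \mapsto z$ and $z \mapsto f(z)$ differ by an element of $\mathcal{P}(x)$ uniformly.

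Let me restate the clean logical skeleton I would write: (1) show $\mathcal{P}(x) \subseteq \I$ and $\I$ is $\bigtriangleup$-closed, so $\I + \mathcal{P}(x) = \I$; (2) prove the core claim that $f(z) \bigtriangleup z \in \mathcal{P}(x)$ for \emph{every} $z \in \css$, using density of $D$, continuity of $f$, and the hypothesis $\{d \bigtriangleup f(d) : d\in D\} \subseteq \mathcal{P}(x)$ via the initial-segment approximation just described; (3) conclude: for $z \in \I$, $f(z) = z \bigtriangleup (z\bigtriangleup f(z)) \in \I + \mathcal{P}(x) = \I$, so $f[\I]\subseteq \I$; and the same computation read backwards shows every $w\in\I$ equals $f(w)$ up to an element of $\mathcal{P}(x)$, and chasing this gives $f[\I]=\I$ — in fact once (2) is known, $f$ agrees with the identity modulo the fixed finite-dimensional "error space" $\mathcal{P}(x)$, so it visibly preserves $\I$ setwise.

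The main obstacle will be step (2): proving $f(z)\bigtriangleup z \in \mathcal{P}(x)$ for all $z$, not merely for $z \in D$. The subtlety is that $\mathcal{P}(x)$ is \emph{not} closed in $\css$ (it is a copy of the Cantor set but as a subset it is closed, actually — $\mathcal{P}(x) = \{y : y \subseteq x\}$ is closed), so one might hope for a direct continuity-plus-density argument: $D$ is dense, $z \mapsto z \bigtriangleup f(z)$ is continuous, $\{d\bigtriangleup f(d):d\in D\}\subseteq\mathcal{P}(x)$ which \emph{is} closed, hence the continuous image of the closure lies in $\mathcal{P}(x)$ — i.e. $\{z \bigtriangleup f(z) : z \in \overline{D}\} = \{z\bigtriangleup f(z):z\in\css\} \subseteq \overline{\mathcal{P}(x)} = \mathcal{P}(x)$. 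This is in fact the quick proof, and it is where I'd be careful that $\mathcal{P}(x)$ really is closed (it is, being determined by the closed condition "$i \notin y$ for all $i \notin x$") and that $D$ is dense in all of $\css$ (it is dense in $\I$, and $\I$ is dense in $\css$ since $\I \supseteq$ the finite sets, which are dense). So the honest main point reduces to this topological closure observation plus density — the rest is bookkeeping with $\bigtriangleup$.
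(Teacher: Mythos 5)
Your final ``quick proof'' is exactly the paper's argument: $D$ is dense in all of $\css$ (as $\I$ contains the finite sets), the map $z\mapsto z\bigtriangleup f(z)$ is continuous, $\p(x)$ is closed, hence $z\bigtriangleup f(z)\subseteq x$ for every $z\in\css$, and then $\bigtriangleup$-closedness of $\I$ gives $z\in\I\Leftrightarrow f(z)\in\I$. The one loose end you wrestle with --- that this literally yields only $f[\I]\subseteq\I$ and $f^{-1}[\I]\subseteq\I$, with equality requiring $f$ to be onto --- is equally present in the paper's own proof, which tacitly relies on the lemma being applied only to a homeomorphism.
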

\begin{proof}
Since $D$ is dense in $\css$ and $d\bigtriangleup f(d)\subset x$ for all $d\in D$, by continuity it follows that $y\bigtriangleup f(y)\subset x$ 
for all $y\in\css$. Then $y\bigtriangleup f(y)\in\I$ for all $y\in\css$. Since $\I$ is closed under $\bigtriangleup$ and $a\bigtriangleup a=\emptyset$ for all $a\in\css$, it is easy to see that $y\in\I$ if and only if $f(y)\in\I$ for all $y\in\css$.
\end{proof}

Let $\mathcal{X}\subset[\omega]\sp\omega$. A tree $T\subset{}\sp{<\omega}([\omega]\sp{<\omega})$ is called a \emph{$\mathcal{X}$-tree of finite sets} if for each $s\in T$ there is $X_s\in\mathcal{X}$ such that for every $a\in[X_s]\sp{<\omega}$ we have $s\sp\frown a\in T$. It turns out that non-meager $P$-filters have a very useful combinatorial characterization as follows.

\begin{lemma}\cite[Lemma 1.3]{laflamme}\label{treelemma}
Let $\F$ be a filter on $\css$ that extends the Fr\'echet filter. Then $\F$ is a non-meager $P$-filter if and only if every $\F$-tree of finite sets has a branch whose union is in $\F$.
\end{lemma}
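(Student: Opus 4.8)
The plan is to prove the two implications of Lemma~\ref{treelemma} separately: the ``if'' direction by a short construction plus a contrapositive argument resting on the classical characterization of meager filters in terms of interval partitions (see \cite{bart}), and the ``only if'' direction by a fusion.

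For the ``if'' direction, assume every $\F$-tree of finite sets has a branch whose union is in $\F$. To see that $\F$ is a $P$-filter, take $\{X_n:n<\omega\}\subset\F$ and, replacing $X_n$ by $X_0\cap\dots\cap X_n$, assume the sequence is $\subseteq$-decreasing; then the tree $T$ of all finite sequences $(a_0,\dots,a_{k-1})$ with $a_i\in[X_i]\sp{<\omega}$, together with $X_s:=X_{|s|}$, is an $\F$-tree of finite sets, and for any branch $b$ and any $n$ the union $\bigcup_i b(i)$ differs from a subset of $X_n$ by the finite set $\bigcup_{i<n}b(i)$; hence any branch whose union lies in $\F$ is a pseudo-intersection of $\{X_n:n<\omega\}$ belonging to $\F$. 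To see that $\F$ is non-meager, suppose it is meager and fix a partition $\omega=\bigcup_n I_n$ into pairwise disjoint finite intervals such that every member of $\F$ meets all but finitely many of the $I_n$. I then build an $\F$-tree of finite sets in which a node $s$ of length $k$ carries $k$ designated interval indices, $X_s$ is the cofinite (hence $\F$-large) complement of the union of the corresponding intervals, and an extension $s^\frown a$ adjoins one further index, chosen larger than every element of $\bigcup s\cup a$ and than every index already designated. Then every branch of this tree has union disjoint from infinitely many $I_n$, hence not in $\F$, contradicting the hypothesis; so $\F$ is non-meager.

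For the ``only if'' direction, let $\F$ be a non-meager $P$-filter and $T$ an $\F$-tree of finite sets. I would construct a branch by a fusion, producing simultaneously nodes $\langle\rangle=s_0\subsetneq s_1\subsetneq\cdots$ with $s_{n+1}=s_n^\frown a_n$ and $a_n\in[X_{s_n}]\sp{<\omega}$, a strictly increasing sequence of block boundaries $m_0<m_1<\cdots$, and a running approximation of a pseudo-intersection $Y$ of the (countably many) sets $X_{s_n}$ met along the branch. The intention is to let $a_n$ absorb the part of $Y$ lying in the block $[m_n,m_{n+1})$ while remaining a subset of $X_{s_n}$, so that the union $U=\bigcup_n a_n$ of the branch contains a tail of $Y$ and therefore lies in $\F$. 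The $P$-filter property is exactly what makes such a $Y\in\F$ available; non-meagerness, in the form ``no partition of $\omega$ into finite intervals has every member of $\F$ meeting all but finitely many pieces'', is what should let the boundaries $m_n$ be chosen so that $U$ really does contain a tail of $Y$, rather than a subset of $Y$ with infinite complement in $Y$ --- which is precisely what goes wrong for the (meager) Fr\'echet filter.

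The step I expect to be the main obstacle is the circularity inside this last fusion: the node $s_{n+1}$ one reaches, and hence the set $X_{s_{n+1}}$ whose elements become available next, is determined by the choice of $a_n$, which in turn is governed by the very boundary $m_{n+1}$ one would like to fix in advance in order to keep the eventual union inside $\F$. Disentangling this means running three interdependent recursions at once --- the branch, the boundary sequence, and the bookkeeping that both builds $Y$ and meets the non-meagerness requirement --- and checking that at every stage a legal choice remains; note that the ``if'' direction already furnishes obstructions (the Fr\'echet filter is a meager $P$-filter that, by that argument, carries an $\F$-tree with no good branch), so non-meagerness cannot simply be dropped and must be genuinely used. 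A cleaner conceptual route, and the one behind \cite[Lemma~1.3]{laflamme}, is to read an $\F$-tree of finite sets as a strategy in the filter game in which one player plays members of $\F$, the other answers with finite subsets, and the second player wins when the union of his moves lies in $\F$: the tree property then says exactly that the first player has no winning strategy, and its equivalence with ``$\F$ is a non-meager $P$-filter'' is the content of that lemma.
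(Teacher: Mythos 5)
First, a point of reference: the paper does not prove this lemma at all --- it is quoted verbatim from Laflamme --- so there is no internal proof to compare against and your argument has to stand on its own. Your ``if'' direction does: the tree of finite subsets of a decreasing sequence $X_0\supseteq X_1\supseteq\cdots$ correctly witnesses the $P$-property, and, given a Talagrand-style interval partition witnessing meagerness, the tree that at each node designates one further interval lying entirely above everything played so far produces only branches whose unions miss infinitely many intervals and hence lie outside $\F$. Both halves are complete.

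The ``only if'' direction --- the implication the paper actually uses, in the proof of Lemma \ref{combinatorics} --- is where your proposal stops short, and you say so yourself: you describe the intended fusion, identify the circularity ($X_{s_{n+1}}$ depends on $a_n$, which is governed by the boundary you would need $X_{s_{n+1}}$ to choose) as the main unresolved obstacle, and then defer to the citation; the closing appeal to the filter game is a restatement of the lemma, not a proof. The missing idea is to quantify over all candidate nodes \emph{in advance}: $T$ is countable, so the $P$-property gives a single $Y\in\F$ with $Y\subseteq^* X_s$ for every $s\in T$; now set $n_0=0$ and choose $n_{k+1}>n_k$ so large that $Y\setminus n_{k+1}\subseteq X_s$ for \emph{every} $s\in T$ of length at most $k$ with $\bigcup\mathrm{ran}(s)\subseteq n_k$ --- legitimate because there are only finitely many such $s$. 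This fixes the interval partition $J_k=[n_k,n_{k+1})$ before any branch is built, breaking the circularity. Non-meagerness then yields $Z\in\F$ with $Z\subseteq Y$ and $Z\cap J_{k_i}=\emptyset$ for infinitely many $k_0<k_1<\cdots$, and the play $a_i=Z\cap[n_{k_i},n_{k_{i+1}})$ is legal at every stage (since $Z\cap J_{k_i}=\emptyset$ forces $a_i\subseteq Y\setminus n_{k_i+1}\subseteq X_{s_i}$, where $s_i=(a_0,\dots,a_{i-1})$ has length $i\leq k_i$ and range contained in $n_{k_i}$) and has union $Z\setminus n_{k_0}\in\F$. Without some such ``only finitely many nodes per level'' device, the three interleaved recursions you describe cannot be closed.
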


Next we prove a combinatorial property that will allow us to construct autohomeomorphisms of the Cantor set that restrict to a given ideal. 
For $x\in\css$, let $\chi(x)\in\csf$ be its characteristic function.

\begin{lemma}\label{combinatorics}
Let $\I$ be a non-meager $P$-ideal and $D_0$, $D_1$ be two countable dense subsets of $\I$. Then there exists $x\in\I$ such that
\begin{itemize}
\item[(i)] for each $d\in D_0\cup D_1$, $d\subset\sp\ast x$ and
\item[(ii)] for each $i\in 2$, $d\in D_i$, $n<\omega$ and $t\in{}\sp{n\cap x}2$, there exists $e\in D_{1-i}$ such that $d\setminus x=e\setminus x$ and $\chi(e)\restriction_{n\cap x}=t$.
\end{itemize}
\end{lemma}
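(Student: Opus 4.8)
The plan is to pass to the dual filter $\F=\I\sp\ast$, which by our conventions is a non-meager $P$-filter extending the Fr\'echet filter, and to produce $y\in\F$ for which $x=\omega\setminus y$ --- automatically a member of $\I$ --- satisfies (i) and (ii). I would build an $\F$-tree of finite sets $T$ so that \emph{every} branch $b$ of $T$ yields, through $y=\bigcup\operatorname{ran}(b)$ and $x=\omega\setminus y$, an $x$ satisfying (i) and (ii); Lemma \ref{treelemma} is then invoked once, only to guarantee that at least one branch of $T$ has its union in $\F$, so that the corresponding $x$ really belongs to $\I$.

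The construction rests on two devices. First, at every node $s$ I put $X_s=\omega\setminus\bigl(n_s\cup\bigcup E_s\bigr)$, where $E_s$ is a finite family of members of $\I$ (the sets ``forbidden'' at $s$), built up monotonically along the tree, and the integer $n_s$ is chosen above $|s|$, above $\max\bigcup\operatorname{ran}(s)$, and increasing along every branch. Since $n_s\cup\bigcup E_s\in\I$ we have $X_s\in\F$, while the requirement $X_s\cap n_s=\emptyset$ makes every branch ``decide $y$ from the left'': $y\cap n_{b\restriction m}=\bigcup\operatorname{ran}(b\restriction m)$ for each $m$, so that $x\cap n=n\setminus\bigcup\operatorname{ran}(b\restriction m)$ whenever $n\le n_{b\restriction m}$. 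Second, I fix an enumeration $D_0\cup D_1=\{c_j:j<\omega\}$, and at a node $s$ with $|s|=\ell$ and $A_s=\bigcup\operatorname{ran}(s)$ I let $E_s$ be the union of: the forbidden family of the immediate predecessor of $s$ (empty at the root); the sets $c_0,\dots,c_\ell$ (which will deliver (i)); and, for every $i\in2$, every $c\in D_i\cap\{c_0,\dots,c_\ell\}$ and every $n\le\ell$, the sets $e(s,i,c,n,t)\in D_{1-i}$ --- one for each $t\in{}\sp{n\setminus A_s}2$ --- chosen so that $\chi(e(s,i,c,n,t))$ extends the finite function equal to $\chi(c)$ on $A_s$ and to $t$ on $n\setminus A_s$. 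Such an element exists because $D_{1-i}$, being dense in $\I$ and hence (as $\I$ contains all finite sets) dense in $\css$, meets every nonempty basic clopen set. Observe that each task $(i,c,n)$ with $c=c_j$ gets handled at every node of every level $\ell\ge\max(n,j)$, hence at some node of every branch, and that along any branch through such a node $s$ one has $n\cap x=n\setminus A_s$, since $n\le\ell<n_s$.

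For the verification, fix any branch $b$ with $y=\bigcup\operatorname{ran}(b)\in\F$ and put $x=\omega\setminus y$. For (i): if $c_j\in D_0\cup D_1$ then $c_j\in E_{b\restriction m}$ for every $m\ge j$, so $c_j\cap b(m)\subseteq c_j\cap X_{b\restriction m}=\emptyset$ for every $m\ge j$, whence $c_j\cap y$ is finite, i.e.\ $c_j\subset\sp\ast x$. For (ii): given $i\in2$, $d\in D_i$, $n<\omega$ and $t\in{}\sp{n\cap x}2$, pick a node $s=b\restriction\ell$ at which the task $(i,d,n)$ is handled. Then $n\cap x=n\setminus A_s$ with $A_s=\bigcup\operatorname{ran}(s)$, so $t\in{}\sp{n\setminus A_s}2$ and $e:=e(s,i,d,n,t)\in D_{1-i}$ is defined, with $\chi(e)\restriction_{n\cap x}=t$ by construction. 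Since both $e$ and $d=c_j$ (where $j\le\ell$) lie in $E_{s'}$ for every $s'\supseteq s$ along $b$, neither meets $b(m)$ for $m\ge\ell$; hence $e\cap y=e\cap A_s=d\cap A_s=d\cap y$, i.e.\ $e\setminus x=d\setminus x$. Thus $e$ witnesses (ii).

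The conceptual obstacle --- and the reason a bare density argument does not suffice --- is that (ii) pins $e$ down on the \emph{infinite} set $\omega\setminus x$. The construction overcomes this by two devices acting in concert: condition (i) forces $d\setminus x$ to be finite, so that ``$e$ agrees with $d$ off $x$'' reduces to the finite demand ``$e=d$ on $A_s$'' together with ``$e$ never reappears in $y\setminus A_s$'', and the latter is guaranteed because $e$ is committed to in advance and thereafter permanently forbidden from contributing a point to $y$. The genuinely delicate ingredient is the bookkeeping: each task must be handled at a node deep enough that the relevant initial segment of $x$ is already frozen and all the sets that must avoid $y$ are already among the forbidden ones, and --- $T$ being infinitely branching --- this has to be done at every node of the appropriate levels, not at a single chosen node.
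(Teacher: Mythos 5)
Your proposal is correct and follows essentially the same route as the paper: both pass to the dual filter, build an $\F$-tree of finite sets in which witnesses $e\in D_{1-i}$ realizing each finite pattern are committed at the nodes and thereafter forbidden from contributing to the branch's union, and invoke Lemma \ref{treelemma} to extract a branch whose union lies in $\F$. The only difference is bookkeeping: the paper enumerates $(D_0\cup D_1)\times{}\sp{<\omega}2$ and handles one pair per level with a single witness per node, while you handle all tasks up to the current level at every node; both work.
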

\begin{proof}
Let $\F=\I\sp\ast$. We will construct an $\F$-tree of finite sets $T$ and use Lemma \ref{treelemma} to find $x\in\I$ with the properties listed. Let us give an enumeration $(D_0\cup D_1)\times{{}\sp{<\omega}2}=\{(d_n,t_n):n<\omega\}$ such that $\{d_n:n\equiv i\ ({mod}\ 2)\}=D_i$ for $i\in 2$.

The definition of $T$ will be by recursion. For each $s\in T$ we also define $n(s)<\omega$, $F_s\in\F$ and $\phi_s:{dom}(s)\to D_0\cup D_1$ so that the following properties hold.

\begin{itemize}
\item[(1)] $\forall s,t\in T\ (s\subsetneq t\Rightarrow n(s)<n(t))$,
\item[(2)] $\forall s\in T\ \forall k<{dom}(s)\ (s(k)\subset n(s\restriction_{k+1})\setminus n(s\restriction_k))$,
\item[(3)] $\forall s,t\in T\ (s\subset t\Rightarrow F_t\subset F_s)$,
\item[(4)] $\forall s\in T\ (F_s\subset\omega\setminus n(s))$,
\item[(5)] $\forall s,t\in T\ (s\subset t\Rightarrow \phi_s\subset\phi_t)$,
\item[(6)] $\forall s\in T,\textrm{ if }k={dom}(s)\ ((d_{k-1}\cup\phi_s(k-1))\setminus n(s)\subset\omega\setminus F_s)$.
\end{itemize}

Since $\emptyset\in T$, let $n(\emptyset)=0$ and $F_\emptyset=\omega$. Assume we have $s\in T$ and $a\in [F_s]\sp{<\omega}$, we have to define everything for $s{}\sp\frown a$. Let $k={dom}(s)$. We start by defining $n(s{}\sp\frown a)=\max{\{k,\max{(a)},{dom}(t_k)\}}+1$. Next we define $\phi_{s{}\sp\frown a}$. We only have to do it at $k$ because of (5). We have two cases.

\underline{Case 1}. There exists $m<{dom}(t_k)$ with $t_k(m)=1$ and $m\in s(0)\cup\ldots\cup s(k-1)$. We simply declare $\phi_{s{}\sp\frown a}(k)=d_k$.

\underline{Case 2}. Not Case 1. We define $r_{s{}\sp\frown a}\in{}\sp{n(s{}\sp\frown a)}2$ in the following way.

$$
r_{s{}\sp\frown a}(m)=\left\{
\begin{tabular}{ll}
$d_k(m)$,& if $m\in s(0)\cup\ldots\cup s(k-1)\cup a$,\\
$t_k(m)$,& if $m\in{dom}(t_k)\setminus(s(0)\cup\ldots\cup s(k-1)\cup a)$,\\
$1$,& in any other case.
\end{tabular}
\right.
$$

Let $i\in 2$ be such that $i\equiv k\ ({mod}\ 2)$. So $d_k\in D_i$, let $\phi_{s{}\sp\frown a}(k)\in D_{1-i}$ be such that $\phi_{s{}\sp\frown a}(k)\cap n(s{}\sp\frown a)=(r_{s{}\sp\frown a})\sp{-1}(1)$, this is possible because $D_{1-i}$ is dense in $\css$. Finaly, define
$$
F_{s{}\sp\frown a}=(F_s\cap(\omega\setminus d_{k-1})\cap(\omega\setminus\phi_{s{}\sp\frown a}(k-1)))\setminus n(s{}\sp\frown a).
$$

Clearly, $F_{s{}\sp\frown a}\in\F$ and it is easy to see that conditions (1) -- (6) hold.

By Lemma \ref{treelemma}, there exists a branch $\{(y_0,\ldots,y_n):n<\omega\}$ of $T$ whose union $y=\bigcup\{y_n:n<\omega\}$ is in $\F$. Let $x=\omega\setminus y\in\I$. We prove that $x$ is the element we were looking for. It is easy to prove that (6) implies (i). 

We next prove that (ii) holds. Let $i\in 2$, $n<\omega$, $t\in{}\sp{n\cap x}2$ and $d\in D_i$. Let $k<\omega$ be such that $(d_k,t_k)=(d,t\sp\prime)$, where $t\sp\prime\in{}\sp{n}2$ is such that $t\sp\prime\restriction_{n\cap x}=t$ and $t\sp\prime\restriction_{n-x}=\underline{0}$. Consider step $k+1$ in the construction of $y$, that is, the step when $y_{k+1}$ was defined. Notice that we are in Case 2 of the construction and $r_{y_{k+1}}$ is defined. Then $\phi_{y_{k+1}}(k)=e$ is an element of $D_{1-i}$. It is not very hard to see that $d\setminus x=e\setminus x$ and $\chi(e)\restriction_{n\cap x}=t$. This completes the proof of the Lemma.
\end{proof}

We will now show that it is enough to prove Theorem \ref{bigthm} for $\F$. Recall the following characterization of non-meager filters.

\begin{lemma}\cite[Theorem 4.1.2]{bart}\label{lemmanonmeager}
Let $\F$ be a filter. Then $\F$ is non-meager if and only if for every partition of $\omega$ into finite sets $\{J_n:n<\omega\}$, there is $X\in\F$ such that $\{n<\omega: X\cap J_n=\emptyset\}$ is infinite.
\end{lemma}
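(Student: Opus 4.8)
The plan is to prove both implications by contraposition. Write $(\dagger)$ for the statement: \emph{there is a partition $\{J_n:n<\omega\}$ of $\omega$ into finite sets such that every $X\in\F$ satisfies $X\cap J_n\neq\emptyset$ for all but finitely many $n$.} The negation of $(\dagger)$ is exactly the right-hand side of the claimed equivalence, so it suffices to show ``$(\dagger)\Rightarrow\F$ is meager'' and ``$\F$ is meager $\Rightarrow(\dagger)$'' — this is a form of Talagrand's characterization of meager filters.

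The implication ``$(\dagger)\Rightarrow\F$ is meager'' is routine. Fix $\{J_n:n<\omega\}$ witnessing $(\dagger)$ and for each $m<\omega$ set $A_m=\{X\in\css: X\cap J_n\neq\emptyset\text{ for every }n\geq m\}$. Each $A_m$ is an intersection of clopen sets, hence closed, and it is nowhere dense: given $s\in{}\sp{<\omega}2$, its (finite) domain meets only finitely many of the $J_n$, so we may pick $n\geq m$ disjoint from it and extend $s$ to a longer node that is $0$ throughout $J_n$; the resulting basic clopen set is disjoint from $A_m$. Since every $X\in\F$ meets all but finitely many $J_n$, we get $\F\subseteq\bigcup_{m<\omega}A_m$, so $\F$ is meager.

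For ``$\F$ meager $\Rightarrow(\dagger)$'' I would argue as follows. Write $\F\subseteq\bigcup_{k<\omega}C_k$ with each $C_k$ closed nowhere dense and $C_0\subseteq C_1\subseteq\cdots$. Build $0=a_0<a_1<\cdots$ recursively: given $a_n$, for each $k\leq n$ and each $s\in{}\sp{a_n}2$ use that $\css\setminus C_k$ is open and dense to pick $t_{s,k}\supseteq s$ with $[t_{s,k}]\cap C_k=\emptyset$, and set $a_{n+1}=1+\max\{|t_{s,k}|:k\leq n,\ s\in{}\sp{a_n}2\}$. Put $J_n=[a_n,a_{n+1})$; I claim this partition witnesses $(\dagger)$. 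Suppose not: some $X\in\F$ has $X\cap J_n=\emptyset$ for infinitely many $n$; enumerate these as $n_0<n_1<\cdots$, so $n_k\geq k$. Since $X$ is empty on each $J_{n_k}$, the upward closure of $\F$ lets us define $X'\supseteq X$, still in $\F$, by keeping $X$ unchanged off $\bigcup_k J_{n_k}$ and, processing $k=0,1,\dots$, setting $X'$ on $[a_{n_k},|t_{s_k,k}|)$ equal to $t_{s_k,k}$ there (and $0$ on the rest of $J_{n_k}$), where $s_k=\chi(X')\restriction_{a_{n_k}}$ is already determined — on $[0,a_{n_k})$ the set $X'$ depends only on $X$ and on the earlier intervals $J_{n_0},\dots,J_{n_{k-1}}\subseteq[0,a_{n_k})$ — and where $|t_{s_k,k}|<a_{n_k+1}$ since $k\leq n_k$ makes $|t_{s_k,k}|$ one of the terms defining $a_{n_k+1}$. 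Then $\chi(X')\restriction_{|t_{s_k,k}|}=t_{s_k,k}$ for every $k$, so $X'\notin C_k$ for every $k$, contradicting $X'\in\F\subseteq\bigcup_k C_k$.

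The delicate point is this last recursion. Escaping one $C_k$ by choosing a suitable finite pattern is trivial; the real idea is that an $X\in\F$ missing infinitely many of the intervals $J_n$ supplies infinitely many independent ``slots'' on which $X$ is empty, so, using nothing about $\F$ beyond its being upward closed, one can plant escape patterns for all the $C_k$ simultaneously and thereby push $X$ out of the meager set $\bigcup_k C_k$ while keeping it in $\F$.
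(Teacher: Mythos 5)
Your proof is correct. The paper does not prove this lemma --- it simply cites Talagrand's characterization of meager filters as \cite[Theorem 4.1.2]{bart} --- and your argument (the closed nowhere dense sets $A_m$ for one direction, and for the other the interval partition $J_n=[a_n,a_{n+1})$ built from escape extensions $t_{s,k}$ together with the upward-closure trick of enlarging an $X$ that misses infinitely many $J_{n_k}$ into an $X'\in\F$ avoiding every $C_k$) is precisely the standard proof given in that reference, with the one delicate point (that $s_k=\chi(X')\restriction_{a_{n_k}}$ is determined before stage $k$ and that $|t_{s_k,k}|<a_{n_k+1}$ because $k\le n_k$) handled correctly.
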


The following was originally proved by Shelah (see \cite[Fact 4.3, p. 327]{shelah-proper}). We include a proof of this fact for the convenience of the reader.

\begin{lemma}\label{omegapower}
If $\F$ is a non-meager $P$-filter, then ${}\sp\omega\F$ is homeomorphic to a non-meager $P$-filter.
\end{lemma}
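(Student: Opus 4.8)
The plan is to realize ${}^\omega\F$ as a filter on a countable index set in a canonical way and then verify directly that the resulting filter is a non-meager $P$-filter. Fix a partition of $\omega$ into infinitely many infinite pieces, say $\omega=\bigsqcup_{n<\omega}\omega_n$ with each $\omega_n$ infinite, together with bijections $\omega_n\cong\omega$. An element $z\in{}^\omega\F$ is a sequence $(z_n)_{n<\omega}$ with each $z_n\in\F$; send it to $\Phi(z)=\bigsqcup_{n<\omega}z_n\subset\omega$, where we regard $z_n$ as a subset of $\omega_n$ via the chosen bijection. The image $\G=\Phi[{}^\omega\F]$ is easily seen to be a filter on $\omega$ (it is upward closed because each $\F$ is, and closed under finite intersections coordinatewise), it contains the Fréchet filter because each $\F$ does and the $\omega_n$ are infinite, and $\Phi$ is a homeomorphism onto $\G$ with the subspace topology from $\css$: it is continuous and injective with continuous inverse since membership of a fixed integer in $\Phi(z)$ depends on a single coordinate of $z$. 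So it remains to check that $\G$ is a non-meager $P$-filter.

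For the $P$-filter property, given $\{A^k:k<\omega\}\subset\G$, write $A^k=\Phi(z^k)$ with $z^k=(z^k_n)_n$. For each fixed $n$, the family $\{z^k_n:k<\omega\}\subset\F$ has a pseudo-intersection $w_n\in\F$ since $\F$ is a $P$-filter; moreover, since $w_n\subset^\ast z^k_n$ we may shrink $w_n$ (by removing finitely many points, which keeps it in $\F$ as $\F$ extends Fréchet) so that $w_n\setminus z^k_n=\emptyset$ for all $k\le n$. Setting $w=\Phi((w_n)_n)\in\G$, one checks $w\subset^\ast A^k$ for every $k$: the part of $w\setminus A^k$ lying in $\omega_n$ is empty once $n\ge k$, and for the finitely many $n<k$ it is finite. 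Hence $\G$ is a $P$-filter.

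For non-meagerness I would use the characterization in Lemma~\ref{lemmanonmeager}. Let $\{J_m:m<\omega\}$ be a partition of $\omega$ into finite sets. The hard part is to produce $X\in\G$ meeting infinitely many $J_m$ in the empty set, using only that $\F$ is non-meager; the subtlety is that a single $J_m$ can spread across many of the blocks $\omega_n$ and conversely each $\omega_n$ is split among infinitely many $J_m$'s, so one must interleave the construction carefully. I would fix an increasing sequence $(m_\ell)$ and, working block by block, apply Lemma~\ref{lemmanonmeager} to $\F$ relative to a suitable partition of $\omega\cong\omega_n$ induced by the traces $J_m\cap\omega_n$, choosing $z_n\in\F$ that avoids a prescribed finite run of these traces; a bookkeeping argument arranges that for each $\ell$ there is some block $n$ responsible for keeping $X=\Phi((z_n)_n)$ disjoint from $J_{m_\ell}$, so infinitely many $J_m$ are missed. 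Since $\F$ being non-meager also yields, via Lemma~\ref{lemmanonmeager} applied inside each block, enough freedom to satisfy these requirements simultaneously, $\G$ is non-meager, and the proof is complete.
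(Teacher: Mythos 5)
Your realization of ${}^\omega\F$ as the filter $\G$ of those subsets of a countable index set that meet every block $\omega_n$ in an element of $\F$ is exactly the paper's construction (there the blocks are the rows $\{n\}\times\omega$ of $\omega\times\omega$), and your verification of the $P$-filter property is correct; it differs only cosmetically from the paper's, which takes a single pseudo-intersection $A$ of the whole countable family of sections $\{z^k_n:k,n<\omega\}$ and then trims it row by row, where you take a separate pseudo-intersection $w_n$ in each block. Either variant works.

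The non-meagerness argument, however, is where essentially all the work of this lemma lies, and what you have written there is a plan rather than a proof. Two concrete gaps. First, the traces $J_m\cap\omega_n$, for $m$ ranging over whatever candidate set you are currently tracking, are pairwise disjoint finite subsets of $\omega_n$ but in general do \emph{not} partition $\omega_n$ --- their union may even be finite --- so Lemma \ref{lemmanonmeager} cannot be applied to them directly; the paper pads them out to a genuine partition of the block with auxiliary finite sets (and treats the finite-union case separately) before invoking the lemma. Second, your phrase ``some block $n$ responsible for keeping $X$ disjoint from $J_{m_\ell}$'' misstates the coordination problem: a single finite $J_{m_\ell}$ can meet several blocks, and \emph{every} block it meets must avoid its trace, so no one block can be responsible. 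The paper resolves this by recursively maintaining, after choosing $F_0,\dots,F_n\in\F$ for the first $n+1$ blocks, an \emph{infinite} set $A_n$ of indices $k$ such that $J_k$ is simultaneously disjoint from the parts of $X$ already placed in blocks $0,\dots,n$ (this is where Lemma \ref{lemmanonmeager} is applied inside block $n$, to the padded partition of the surviving traces), then diagonalizing to pick targets $s(\ell)\in A_\ell$, and finally deleting from each $F_n$ a finite initial segment $t(n)$ large enough to swallow the traces on block $n$ of the finitely many $J_{s(0)},\dots,J_{s(n-1)}$ already committed to. Without this bookkeeping --- the nested infinite candidate sets, the padding to an honest partition, and the finite shifts $t(n)$ --- the conclusion that infinitely many $J_m$ are missed does not follow, so the proof as written is incomplete precisely at its crux.
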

\begin{proof}
Let 
$$
\G=\{A\subset\omega\times\omega:\forall n<\omega (A\cap(\{n\}\times\omega)\in\F)\}.
$$
Notice that $\G$ is homeomorphic to ${}\sp\omega{\F}$. It is easy to see that $G$ is a filter on $\omega\times\omega$. We next prove that $\G$ is a non-meager $P$-filter. 

Let $\{A_k:k<\omega\}\subset\G$. For each $\{k,n\}\subset\omega$, let $A_k\sp n=\{x\in\omega:(n,x)\in A_k\}\in\F$. Since $\F$ is a $P$-filter, there is $A\in\F$ such that $A\subset\sp\ast A_k\sp n$ for all $\{k,n\}\subset\omega$. Let $f:\omega\to\omega$ be such that $A\setminus f(n)\subset A_k\sp n$ for all $k\leq n$. Let 
$$
B=\bigcup\big\{\{n\}\times(A\setminus f(n)):n<\omega\big\}.
$$
Then it is easy to see that $B\in\G$ and $B$ is a pseudointersection of $\{A_n:n<\omega\}$. So $\G$ is a $P$-filter.

Let $\{J_k:k<\omega\}$ a partition of $\omega\times\omega$ into finite subsets. Recursively, we define a sequence $\{F_n:n<\omega\}\subset\F$ and a sequence $\{A_n:n<\omega\}\subset[\omega]\sp{\omega}$ such that $A_{n+1}\subset A_n$ and $A_n\subset\{k<\omega:J_k\cap(\{n\}\times F_n)=\emptyset\}$ for all $n<\omega$. 

For $n=0$, since $\F$ is non-meager, by Lemma \ref{lemmanonmeager} there is $F_0\in\F$ such that $\{k<\omega:J_k\cap(\{0\}\times F_0)=\emptyset\}$ is infinite, call this last set $A_0$. Assume that we have the construction up to $m<\omega$, then $\B=\{J_k\cap (\{m+1\}\times \omega):k\in A_m\}$ is a family of pairwise disjoint finite subsets of $\{m+1\}\times\omega$. If $\bigcup\B$ is finite, let $F_{m+1}\in\F$ be such that $F_{m+1}\cap(\bigcup\B)=\emptyset$ and let $A_{m+1}=A_m$. If $\bigcup\B$ is infinite, let $\{B_k:k\in A_m\}$ be any partition of $(\{m+1\}\times\omega)\setminus\bigcup\B$ into finite subsets (some possibly empty). For each $k\in A_m$, let $C_k=(J_k\cap(\{m+1\}\times\omega))\cup B_k$. Then $\{C_k: k\in A_m\}$ is a partition of $\{m+1\}\times\omega$ into finite sets so by Lemma \ref{lemmanonmeager}, there is $F_{m+1}\in\F$ such that $\{k\in A_m: C_k\cap(\{m+1\}\times F_{m+1})=\emptyset\}$ is infinite, call this set $A_{m+1}$. This completes the recursion.

Define an increasing function $s:\omega\to\omega$ such that $s(0)=\min{A_0}$ and $s(k+1)=\min{(A_{k+1}\setminus\{s(0),\ldots,s(k)\})}$ for $k<\omega$. Also, define $t:\omega\to\omega$ such that $t(0)=0$ and $t(n+1)=\min{\{m<\omega:(J_{s(0)}\cup\ldots\cup J_{s(n)})\cap(\{n+1\}\times\omega)\subset \{n+1\}\times m\}}$. Finally, let
$$
G=\bigcup{\{\{n\}\times(F_n\setminus t(n)):n<\omega\}}.
$$

Then $G\in\G$ and for all $k<\omega$, $G\cap J_{s(k)}=\emptyset$. Thus, $\G$ is non-meager by Lemma \ref{lemmanonmeager}.
\end{proof}

We now have everything ready to prove our result.

\begin{proof}[{\bf Proof of Theorem \ref{bigthm}}]
By Lemma \ref{omegapower}, it is enough to prove that $\F$ is CDH, equivalently that $\I=\F\sp\ast$ is CDH. Let $D_0$ and $D_1$ be two countable dense subsets of $\I$ and let $x\in\I$ be given by Lemma \ref{combinatorics}.

We will construct a homeomorphism $h:\css\to\css$ such that $h[D_0]=D_1$ and 
\begin{equation*}\tag{$\star$}
\forall d\in D\ (d\bigtriangleup h(d)\subset x).
\end{equation*}

By Lemma \ref{restriction}, $h[\I]=\I$ and we will have finished.

We shall define $h$ by approximations. By this we mean the following. We will give a strictly increasing sequence $\{n(k):k<\omega\}\subset\omega$ and in step $k<\omega$ a homeomorphism (permutation) $h_k:\p(n(k))\to\p(n(k))$ such that
\begin{equation*}\tag{$\ast$}
\forall j<k<\omega\ \forall a\in\p(n(k))\ (h_k(a)\cap n(j)=h_j(a\cap n(j))).
\end{equation*}

By $(\ast)$, we can define $h:\css\to\css$ to be the inverse limit of $\{h_k:k<\omega\}$, which is a homeomorphism.

Let $D_0\cup D_1=\{d_n:n<\omega\}$ in such a way that $\{d_n:n\equiv i\ ({mod}\ 2)\}=D_i$ for $i\in 2$. To make sure that $h[D_0]=D_1$, in step $k$ we have to decide the value of $h(d_k)$ when $k$ is even and the value of $h\sp{-1}(d_k)$ when $k$ is odd. We do this by approximating a bijection $\pi:D_0\to D_1$ in $\omega$ steps by a chain of finite bijections $\{\pi_k:k<\omega\}$ and letting $\pi=\bigcup\{\pi_k:k<\omega\}$. In step $k<\omega$, we would like to have $\pi_k$ defined on some finite set so that the following conditions hold whenever $\pi_k\subset\pi$:
\begin{itemize}
\item[$(a)_k$] if $j<k$ is even, then $h_k(d_j\cap n(k))=\pi(d_j)\cap {n(k)}$, and
\item[$(b)_k$] if $j<k$ is odd, then $h_k(d_j\cap n(k))=\pi\sp{-1}(d_j)\cap n(k)$.
\end{itemize}

Notice that once $\pi$ is completely defined, if $(a)_k$ and $(b)_k$ hold for all $k<\omega$, then $h[D]=E$. As we do the construction, we need to make sure that the following two conditions hold.
\begin{align*}
(c)_k & \ \ \ \forall i\in n(k)\setminus x\ \ \forall a\in\p(n(k))\ (i\in a\Leftrightarrow i\in h_k(a))\\
(d)_k & \ \ \ \forall d\in{dom}(\pi_k)\ (d\setminus x=\pi_k(d)\setminus x)
\end{align*}

Condition $(c)_k$ is a technical condition that will help us carry out the recursion. Notice that if we have condition $(d)_k$ for all $k<\omega$, then $(\star)$ will hold.

Assume that we have defined $n(0)<\ldots<n(s-1)$, $h_0,\ldots, h_{s-1}$ and a finite bijection $\pi_s\subset D_0\times D_1$ with $\{d_r:r<s\}\subset{dom}(\pi_s)\cup{dom}(\pi_s\sp{-1})$ in such a way that if $\pi\supset\pi_s$, then $(a)_{s-1}$, $(b)_{s-1}$, $(c)_{s-1}$ and $(d)_{s-1}$ hold. Let us consider the case when $s$ is even, the other case can be treated in a similar fashion.

If $d_s=\pi_s\sp{-1}(d_r)$ for some odd $r<s$, let $n(s)=n(s-1)+1$. If we let $\pi_{s+1}=\pi_s$, it is easy to define $h_s$ so that it is compatible with $h_{s-1}$ in the sense of $(\ast)$, in such a way that $(a)_s$, $(b)_s$, $(c)_s$ and $(d)_s$ hold for any $\pi\supset\pi_{s+1}$. So we may assume this is not the case.

Notice that the set $S=\{d_r:r<s+1\}\cup\{\pi_s(d_r):r<s,r\equiv 0\ (mod\ 2)\}\cup\{\pi_s\sp{-1}(d_r):r<s,r\equiv 1\ (mod\ 2)\}$ is finite. Choose $p<\omega$ so that $d_s\setminus p\subset x$. Let $r_0=h_{s-1}(d_s\cap n(s-1))\in\p(n(s-1))$. Choose $n(s-1)<m<\omega$ and $t\in{}\sp{m\cap x}2$ in such a way that $t\sp{-1}(1)\cap n(s-1)=r_0\cap n(s-1)\cap x$ and $t$ is not extended by any element of $\{\chi(a):a\in S\}$. By Lemma \ref{combinatorics}, there exists $e\in E$ such that $d_s\setminus x=e\setminus x$ and $\chi(e)\restriction_{m\cap x}=t$. Notice that $e\notin S$ and $\chi(e)\restriction_{n(s-1)}=r_0$ by condition $(c)_{s-1}$. We define $\pi_{s+1}=\pi_s\cup\{(d_s,e)\}$. Notice that $(d)_s$ holds in this way.

Now that we have decided where $\pi$ will send $d_s$, let $n(s)>\max{\{p,m\}}$ be such that there are no two distinct $a,b\in S\cup\{\pi_{s+1}(d_s)\}$ with $a\cap n(s)=b\cap n(s)$. Topologically, all elements of $S\cup\{\pi_{s+1}(d_s)\}$ are contained in distinct basic open sets of measure $1/(n(s)+1)$.

Finally, we define the bijection $h_s:\p(n(s))\to\p(n(s))$. For this part of the proof we will use characteristic functions instead of subsets of $\omega$ (otherwise the notation would become cumbersome). Therefore, we may say $h_{r}:{}\sp{n(r)}2\to{}\sp{n(r)}2$ is a homeomorphism for $r<s$. 

Let $(q,q\sp\prime)\in{}\sp{n(s-1)}2\times{}\sp{n(s)\setminus x}2$ be a pair of compatible functions. Notice that $(h_{s-1}(q),q\sp\prime)$ are also compatible by $(c)_{s-1}$. Consider the following condition.
$$
\triangledown(q,q\sp\prime):\ \ \ 
\forall a\in{}\sp{n(s)}2\ (q\cup q\sp\prime\subset a\Leftrightarrow h_{s-1}(q)\cup q\sp\prime\subset h_s(a))
$$
Notice that if we define $h_s$ so that $\triangledown(q,q\sp\prime)$ holds for each pair $(q,q\sp\prime)\in{}\sp{n(s-1)}2\times{}\sp{n(s)\setminus x}2$ of compatible functions, then $(\ast)$ and $(c)_s$ hold as well.

Then for each pair $(q,q\sp\prime)\in{}\sp{n(s-1)}2\times{}\sp{n(s)\setminus x}2$ of compatible functions we only have to find a bijection $g:{}\sp{T}2\to{}\sp{T}2$, where $T=(n(s)\cap x)\setminus n(s-1)$ (this bijection will depend on such pair) and define $h_s:{}\sp{n(s)}2\to{}\sp{n(s)}2$ as 
$$
h_s(a)=h_{s-1}(q)\cup q\sp\prime\cup g(f\restriction_T),
$$
whenever $a\in{}\sp{n(s)}2$ and $q\cup q\sp\prime\subset a$. There is only one restriction in the definition of $g$ and it is imposed by conditions $(a)_s$ and $(b)_s$; namely that $g$ is compatible with the bijection $\pi_{s+1}$ already defined. However by the choice of $n(s)$ this is not hard to do. This finishes the inductive step and the proof.

\end{proof}

\end{document}